\newtheorem{thm}{Theorem}[section]
\title{A note on the minimum size of $k$-rainbow connected graphs.}
\author{Allan Lo}
\address{School of Mathematics, University of Birmingham, Birmingham, B15~2TT, United Kingdom}
\email{s.a.lo@bham.ac.uk}
\thanks {The research leading to these results was supported by the European Research Council
under the ERC Grant Agreement no. 258345.}
\date{\today}
\keywords{edge coloring, rainbow connection}
\begin{document}

\begin{abstract}
An edge-coloured graph $G$ is \emph{rainbow connected} if there exists a rainbow path between any two vertices.
A graph $G$ is said to be \emph{$k$-rainbow connected} if there exists an edge-colouring of $G$ with at most $k$ colours that is rainbow connected.
For integers $n$ and~$k$, let $t(n,k)$ denote the minimum number of edges in $k$-rainbow connected graphs of order~$n$.
In this note, we prove that $t(n,k) = \lceil k(n-2)/(k-1) \rceil$ for all $n, k \ge 3$.
\end{abstract}

\maketitle

\section{Introduction}
We consider finite and simple graphs only.
An edge-coloured graph is \emph{rainbow} if all edges have distinct colours.
An edge-coloured graph is \emph{rainbow connected} if there exists a rainbow path between any two vertices.
Given an integer $k$, a graph $G$ is \emph{$k$-rainbow connected} if there is an edge-colouring of $G$ with at most $k$ colours that is rainbow connected.
This notion of connectivity was first introduced by Chartrand, Johns, McKeon and Zhang~\cite{MR2400153} in~2008.
Since then, many results have been discovered.
For a survey, we recommend~\cite{MR3015944}.

For integers $n$ and~$k$, let $t(n,k)$ denote the minimum number of edges in $k$-rainbow connected graphs of order~$n$.
Schiermeyer~\cite{MR3015314} evaluated $t(n,k)$ exactly for $k=1$ and $k \ge n/2$.
\begin{thm}[Schiermeyer~\cite{MR3015314}] \label{thm:bound}
\begin{align*}
t(n,k) = 
\begin{cases}
\binom{n}2 & \text{for $k = 1$,}\\
n & \text{for $n/2 \leq k \leq n-2$,}\\
n-1 & \text{for $ k \geq n-1$.}
\end{cases}
\end{align*}
\end{thm}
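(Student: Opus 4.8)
The plan is to handle the three ranges of $k$ separately, relying throughout on two elementary observations. First, a rainbow connected graph must be connected, so any $k$-rainbow connected graph has at least $n-1$ edges; thus $t(n,k)\ge n-1$ for every $k$. Second, since a rainbow path uses each colour at most once, under a colouring with $k$ colours every rainbow path has at most $k$ edges. The two extreme cases then follow quickly. For $k=1$, every rainbow path has at most one edge, so any two vertices must be adjacent; hence the only $1$-rainbow connected graph of order $n$ is $K_n$, giving $t(n,1)=\binom n2$. For $k\ge n-1$, I would match the connectivity bound by taking any spanning tree, for instance $P_n$, and colouring its $n-1\le k$ edges with distinct colours; the unique path between any two vertices is then rainbow, so $P_n$ is $k$-rainbow connected and $t(n,k)=n-1$.

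The substantive range is $n/2\le k\le n-2$, where I would prove $t(n,k)=n$. For the upper bound I would use the cycle $C_n$, which is connected and has exactly $n$ edges. I would colour its edges $e_0,\dots,e_{n-1}$ by $c(e_i)=i \bmod \lceil n/2\rceil$, so that every block of $\lfloor n/2\rfloor$ consecutive edges receives pairwise distinct colours. Then the shorter of the two arcs joining any pair of vertices, which has at most $\lfloor n/2\rfloor$ edges, is rainbow. Because $k$ is an integer with $k\ge n/2$, we have $k\ge\lceil n/2\rceil$, so this colouring uses at most $k$ colours and witnesses $t(n,k)\le n$.

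For the matching lower bound I would show that no graph with only $n-1$ edges can be $k$-rainbow connected once $k\le n-2$. Such a graph is connected, hence a tree, and the crux is the lemma that \emph{any two edges of a tree lie on a common path}. Granting this, suppose a rainbow connected tree had two edges of the same colour: the unique path realising their common path would fail to be rainbow, and being the only path between its endpoints in a tree, those endpoints would not be rainbow connected, a contradiction. Thus all $n-1$ edges must receive distinct colours, which is impossible with only $k\le n-2$ colours; hence every $k$-rainbow connected graph has at least $n$ edges, completing $t(n,k)=n$.

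I expect the main obstacle to be this lower bound, and specifically the tree lemma. I would prove it by taking edges $e_1=\{a,b\}$ and $e_2=\{c,d\}$, letting $Q$ be the unique shortest path between the two edges (say meeting $e_1$ at $b$ and $e_2$ at $c$), and noting that the path from $a$ to $d$ traverses $e_1$, then $Q$, then $e_2$, hence contains both; the case where $e_1$ and $e_2$ share a vertex is immediate. The only other point needing care is verifying that the periodic colouring of $C_n$ above really makes every short arc rainbow, including the arcs wrapping past $e_{n-1}$ back to $e_0$, which is a routine check on residues modulo $\lceil n/2\rceil$.
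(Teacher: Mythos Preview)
The paper does not prove this theorem; it is quoted from Schiermeyer~\cite{MR3015314} and used as a black box (only the cases $k\ge n/2$ are invoked, to bound $t(n,k)$ from above and to force $n>2k$ in the minimal counterexample). So there is no proof in the paper to compare against.

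Your argument is correct and self-contained. The cases $k=1$ and $k\ge n-1$ are exactly the standard observations. For $n/2\le k\le n-2$, the periodic colouring of $C_n$ with period $\lceil n/2\rceil$ works as you describe (the wrap-around check is fine: for even $n$ the colouring has two full periods, and for odd $n=2m-1$ any cyclic block of $m-1$ edges splits into a tail $\{i-m,\dots,m-2\}$ and a head $\{0,\dots,i-m-1\}$, which are disjoint). The lower bound via the tree lemma is also sound; your sketch can be tightened by the cut argument: deleting $e_1=ab$ splits the tree into components $T_a,T_b$, say $e_2\subseteq T_b$, and then deleting $e_2=cd$ from $T_b$ puts $b$ with one endpoint, say $c$; the unique $a$--$d$ path then necessarily crosses both $e_1$ and $e_2$. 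This makes the ``shortest path between the two edges'' step unnecessary.
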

In the same paper, he also showed that $t(n,2) = (1 + o(1))n \log_2 n$. 
The lower bound was further improved by Li, Li, Sun and Zhao~\cite{li2012note}.
For general $3 \le k < n/2$, the best known bounds on $t(n,k)$ are
\begin{align}
	\left\lceil \frac{(k+1)n-1}k \right\rceil -k-2 \le 	t(n,k) \le \left\lceil \frac{k(n-2)}{k-1} \right\rceil, \label{eqn:bound}
\end{align}
where the lower bound is due to Li et al.~\cite{li2012note} and the upper bound is due to a construction of Bode and Harborth~\cite{BodeHarborth}.
When $k = 3$, Bode and Harborth~\cite{BodeHarborth} showed that $t(n,3)$ is actually equal to the upper bound for $n \ge 3$.
In this note, we show that the same statement holds for all $n,k \ge 3$.

\begin{thm} \label{thm:1-connected}
For $k , n \ge 3$, we have $t(n,k) = \lceil k(n-2)/(k-1) \rceil $.
\end{thm}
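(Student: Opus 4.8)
The upper bound $t(n,k)\le \lceil k(n-2)/(k-1)\rceil$ is the Bode--Harborth construction already recorded in \eqref{eqn:bound}, so the plan is to prove the matching lower bound: every graph $G$ on $n\ge 3$ vertices admitting a rainbow connected edge-colouring with $k$ colours has $|E(G)|\ge \lceil k(n-2)/(k-1)\rceil$. Clearing the denominator, this is the same as $(k-1)|E(G)|\ge k(n-2)$ (note $G$ must be connected, so $|E(G)|\ge n-1$ automatically), and writing $\mu=\mu(G)=|E(G)|-n+1$ for the cyclomatic number, a one-line rearrangement turns the target into $n\le (k-1)\mu+k+1$. So I would fix a rainbow connected $k$-colouring of $G$ and aim to establish this bound.

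The natural base case is $\mu=0$, i.e.\ $G$ a tree. The key observation is that in a tree \emph{any} two distinct edges lie on a common path: the minimal subtree spanned by their (at most four) endpoints has no degree-one vertex besides at most one from each of the two edges, hence at most two leaves, hence is a path containing both edges. Consequently a rainbow connected colouring of a tree must colour all edges differently, so $|E(G)|\le k$ and $n\le k+1$ --- which is exactly the claim for $\mu=0$. The bound is likewise trivial for every $n\le k+1$, where it merely asserts $n-1\le|E(G)|$.

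For $n\ge k+2$ I would argue by induction, trying to excise a small piece of $G$ and land on a smaller $k$-rainbow connected graph. The ideal move is to remove a ``short ear'': a suppressible path $a w_1\cdots w_t b$ with $t\le k-1$ and each $w_i$ of degree two, whose removal keeps the graph $k$-rainbow connected; this drops $\mu$ by one and $n$ by at most $k-1$, which is precisely the ratio the target inequality can afford, so the induction closes provided such an ear is always available. A structural fact that helps locate short pieces is that a $k$-rainbow connected graph has girth at most $2k$: if the shortest cycle $C$ had length $g>2k$, a rainbow path between two antipodal vertices of $C$ has length $\le k$, and its symmetric difference with a suitable arc of $C$ contains a cycle shorter than $g$ (the borderline value $g=2k+1$ forces every $k$ consecutive edges of $C$ to carry all $k$ colours, which is impossible).

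The main obstacle, I expect, is proving that a harmless reduction of this kind always exists, and in particular the bookkeeping around two genuinely tight phenomena. First, since $t(n,k)-t(n-1,k)=2$ exactly when $n\equiv 3\pmod{k-1}$, one cannot simply peel a single pendant vertex in that residue class; instead one must show that a $k$-rainbow connected graph realised with only $t(n-1,k)+1$ edges cannot carry a pendant vertex at all --- equivalently, that attaching a pendant vertex to an extremal graph destroys $k$-rainbow connectedness when $n\equiv 3\pmod{k-1}$. Second, ``rigid'' graphs such as $C_{2k}$ admit no single-vertex deletion preserving $k$-rainbow connectedness, so these must be pinned down directly (showing that such a graph has at most $2k$ vertices) and absorbed into the finitely many base cases $n\le 2k$. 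Making these two points mesh with the inductive step is where the real work lies.
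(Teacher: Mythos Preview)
Your reformulation via the cyclomatic number and the tree base case are correct, but the proposal stops short of a proof: the whole inductive step --- locating a short ear whose deletion preserves $k$-rainbow connectedness --- is left open (``where the real work lies''). Nothing you write guarantees such an ear exists; your girth observation is true but only produces a short cycle, not a \emph{removable} one, and the tight examples you raise ($C_{2k}$, pendant vertices at the bad residue) show there is no uniform single-move reduction. As it stands this is a plan with the central lemma missing, not a proof.

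The paper avoids this difficulty by a completely different mechanism. In a minimal counterexample $G$, remove all edges of the most frequent colour (say colour $k$); this kills at least $e(G)/k$ edges, leaving fewer than $n-2$, hence at least three components. Group these into nonempty blocks $V_1,V_2,V_3$, so that every inter-block edge has colour~$k$. The key observation is that a rainbow path between two vertices of $V_i\cup V_j$ can use colour $k$ at most once and therefore cannot leave $V_i\cup V_j$; thus each induced subgraph $G[V_i\cup V_j]$ is itself $k$-rainbow connected. Minimality of $n$ then gives $e(G[V_i\cup V_j])\ge k(n_i+n_j-2)/(k-1)$, and summing the three inequalities (plus one inter-block edge for each pair) yields $2e(G)\ge 2k(n-3)/(k-1)+3\ge 2k(n-2)/(k-1)$ for $k\ge 3$. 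The point is that rainbow connectedness of the subgraphs is inherited automatically from the colouring, so no ``deletion preserves the property'' argument --- exactly the step you could not complete --- is ever needed.
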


For $n/2 < k$, this theorem coincide with Theorem~\ref{thm:bound}.
As mentioned before, the case $k =3$ has been already proved by Bode and Harborth~\cite{BodeHarborth}, but our proof is different and shorter.

We would need the following notation.
For (edge-coloured) graphs $G$ and disjoint $U,W \subseteq V(G)$, we write $G[U]$ for the (edge-coloured) subgraph of $G$ induced by~$U$ and $G[U,W]$ for the (edge-coloured) bipartite subgraph of $G$ induced by partition classes $U$ and~$W$.

\begin{proof}[Proof of Theorem~\ref{thm:1-connected}]
Note that $t(n,k) \le \lceil k(n-2)/(k-1) \rceil $ by Theorem~\ref{thm:bound} and~\eqref{eqn:bound}.
Therefore, to prove the theorem, it suffices to show that $t(n,k) \ge k(n-2)/(k-1)$ for all $n, k \ge 3$.
Fix $k \ge 3$.
Suppose the theorem is false, so there exists a $k$-rainbow connected graph $G$ of order $n$ with $e(G) < k(n-2)/(k-1)$, so $n > 2k$ by Theorem~\ref{thm:bound}.
We further assume that $n$ is minimal.
Fix an edge-colouring~$c$ of $G$ with colours $\{1,2,\dots,k\}$ such that the resultant edge-coloured graph~$G^c$ is rainbow connected.
Without loss of generality, there are at least $e(G)/k$ edges of colour~$k$.
We are going to show that there exists a tripartition $V_1,V_2,V_3$ of $V(G)$ such that, for all $1 \le i< j \le 3$,
\begin{itemize}
	\item[(i)] all edges between $V_i$ and $V_j$ have colours $k$ in $G^c$;
	\item[(ii)] $G[V_i \cup V_j]$ is rainbow $k$-connected; 
	\item[(iii)] there is an edge between $V_i$ and $V_j$ in $G$.
\end{itemize}
Let $H$ be the edge-coloured subgraph obtained from $G^c$ by removing all the edges of colour $k$.
Note that $e(H) \le e(G) - e(G)/k < n-2$.
Hence, $H$ has at least $3$ components.
Let $V_1,V_2,V_3$ be a tripartition of $V(G)$ such that $H[V_i,V_j]$ is empty for all $1 \le i < j \le 3$ and $V_i \ne \emptyset$ for all~$1 \le i \le 3$.
(Note that $H[V_i]$ may consist of more than one components.)
Fix $1 \le i< j \le 3$.
Clearly, (i) holds by our construction.
To show that (ii) holds, it suffices to show that $G^c[V_i \cup V_j]$ is rainbow connected.
Recall that $G^c$ is rainbow connected, so for all $x, y \in V_i \cup V_j$, there exists a rainbow path $P$  in $G^c$ from $x$ to $y$.
By~(i), we deduce that $V(P) \subseteq V_i \cup V_j$.
Therefore (ii) holds.
Moreover, (iii) holds by considering a rainbow path $P$ in $G^c$ from $x \in V_i$ to $y \in V_j$.
Thus, we have the desired tripartition of~$V(G)$.

For $1 \le i \le 3$, let $n_i = |V_i|$ and so we have $n_i \ge 1$ by~(iii) and $n_1 + n_2 + n_3 = n$.
Since $n$ is chosen to be minimal, (ii) implies that $e(G[V_i \cup V_j]) \ge k(n_i+n_j -2)/(k-1)$ for all $1 \le i < j \le 3$.
Recall (iii) that $e(G[V_i,V_j]) \ge 1$.
Therefore we have
\begin{align*}
	2e(G)  & = \sum_{1 \le i < j \le 3} \Big( e(G[V_i \cup V_j]) + e(G[V_i,V_j]) \Big) \\
	& \ge \sum_{1 \le i < j \le 3} \left( \frac{k(n_i+n_j -2)}{k-1} + 1 \right) 
	= \frac{2k(n - 3)}{k-1} +3\ge  \frac{2k(n - 2)}{k-1},
\end{align*}
where the last inequality holds since $k \ge 3$.
Thus, $e(G) \ge k(n-2)/(k-1)$, a contradiction.
\end{proof}

\end{document}